\newcommand{\R}{\mathbb{R}}
\newcommand{\eqdef}{\stackrel{\mathrm{def}}{=}}
\newcommand{\g}{\mathfrak{g}}
\newcommand{\id}{\mathrm{id}}
\newcommand{\A}{\mathcal{A}}
\newcommand{\ttt}{\mathfrak{t}}
\newcommand{\s}{\mathfrak{s}}
\newcommand{\const}{\mathrm{const}}
\newcommand{\pr}{\mathrm{pr}_{\g_1}}
\DeclareMathOperator{\dist}{dist}
\DeclareMathOperator{\ri}{ri}
\DeclareMathOperator{\sspan}{span}
\DeclareMathOperator{\ad}{ad}
\newtheorem{theorem}{Theorem}
\newtheorem{proposition}{Proposition}
\newtheorem{lemma}{Lemma}
\newtheorem{corollary}{Corollary}
\theoremstyle{definition}
\newtheorem{definition}{Definition}
\newtheorem{example}{Example}
\newtheorem{remark}{Remark}
\title{Existence theorem for sub-Lorentzian problems}
\author{
\begin{tabular}{cc}
L.\,V.~Lokutsievskiy & A.\,V.~Podobryaev\\
Steklov Mathematical Institute & Ailamazyan Program Systems Institute\\
of Russian Academy of Sciences & of Russian Academy of Sciences,\\
Moscow, Russia & Pereslavl-Zalesskiy, Russia\\
\tt{lion.lokut@gmail.com} & \tt{alex@alex.botik.ru}\\
\end{tabular}
}
\date{}
\begin{document}

\maketitle

\begin{abstract}
	
	In this paper, we prove the existence theorem for longest paths in sub-Lorentzian problems, which generalizes the classical theorem for globally hyperbolic Lorentzian manifolds. We specifically address the case of invariant structures on homogeneous spaces, as the conditions for the existence theorem in this case can be significantly simplified. In particular, it turns out that longest paths exist for any left-invariant sub-Lorentzian structures on Carnot groups.
	
\textbf{Keywords}: Lorentzian geometry, sub-Lorentzian geometry, anti-norm, causal structure, existence theorem, Filippov's theorem, Carnot group.

\textbf{AMS subject classification}:
49J15, 
53C50, 
53C30. 

\end{abstract}

\begin{footnotesize}
\textbf{Acknowledgments.}
The work of A.\,V.~Podobryaev (Sections~4--6) was supported by the Russian Science Foundation under grant 22-11-00140 (https://rscf.ru/en/project/22-11-00140/) and performed in Ailamazyan Program Systems Institute of Russian Academy of Sciences.
\end{footnotesize}

\section{Problem statement}

The most well-known example of a Lorentzian manifold is Minkowski spacetime $\mathbb{R}^{1,3}$ with the Lorentzian metric $x_0^2 - x_1^2 - x_2^2 - x_3^2$. A Lorentzian manifold is a classical generalization of Minkowski spacetime to the general case of smooth manifolds. Let's provide the precise definition.

Let $M$, $\dim M=n+1$, be a smooth manifold equipped with a sign-changing non-degenerate quadratic form $g$ of signature $(1,n)$ at each point on $M$, which smoothly dependents on the point on $M$. Tangent vectors in $TM$ with negative length are referred to as \textit{spacelike}, those with positive length are \textit{timelike}, and those with zero length are \textit{lightlike}. We will also say that a vector is \emph{non-spacelike} if it is timelike or lightlike.

Note that since the form $g$ has exactly one negative eigenvalue at each point $x\in M$, the set of all non-spacelike vectors in the tangent space $T_xM$ is the union of two symmetric convex circular cones, similar to a cone
\[
	\xi_0 \ge \sqrt{\xi_1^2+\ldots+\xi_n^2}.
\]

In special relativity, it is customary to distinguish between these two cones: one is called the \textit{past cone} and we will denote it as $C^-$, while the other is the \textit{future cone}, denoted as $C^+$. This separation is referred to as the \textit{causal structure}. The future cone in the tangent space at each point of the manifold $M$ can be defined using a 1-form $\tau\in\Lambda^1M$ (referred to as the \textit{time orientation}), such that any non-zero vector in the hyperplane $\ker{\tau}$ is spacelike,
i.e.,
\[
	\forall\, x\in M \quad \forall\, \xi\in\ker{\tau_x} \qquad \xi\ne 0 \quad\Rightarrow\quad g_x(\xi,\xi)<0.
\]
The condition $\tau_x(\xi) \geq 0$ singles out the future cone from all non-spacelike vectors $\xi\in T_xM$, while the condition $\tau_x(\xi) \leq 0$ singles out the past cone:
\[
	C^+_x = \Big\{\xi\in T_xM\ \Big|\ g_x(\xi,\xi)\ge 0\ \text{and}\ \tau_x(\xi) \geq 0\Big\},
\]
\[
	C^-_x = \Big\{\xi\in T_xM\ \Big|\ g_x(\xi,\xi)\ge 0\ \text{and}\ \tau_x(\xi) \leq 0\Big\}.
\]

\begin{definition}
	
	The triple $(M, g, \tau)$ is called a \textit{time-oriented Lorentzian manifold}.
	
\end{definition}

\begin{remark}

	It is noteworthy that the 1-form $\tau$ itself carries neither geometric nor physical significance. What matters is that this form separates the set of non-spacelike vectors into two convex cones, $C^+_x$ and $C^-_x$. Therefore, it is often convenient to disregard the 1-form $\tau$ and characterize the past and future cones through alternative means, or to replace the 1-form $\tau$ with another 1-form $\tau'$ that defines the same past and future cones.
	
\end{remark}

Note that the concepts of non-spacelike tangent vectors and the future cone lead to the definition of paths along which material points can move on a Lorentzian manifold.

\begin{definition}
	
	A Lipschitz path $x(t)$ on $M$ is called admissible if $\dot x(t)\in C^+_{x(t)}$ for almost every $t$, meaning that the velocity along the path at almost each moment in time is non-spacelike and lies within the future cone.
	
\end{definition}

The distance on the Lorentzian manifold is defined as follows:
\[
	\dist_L(x_0,x_1) = \sup\int_0^1 \sqrt{g_{x(t)}(\dot x(t),\dot x(t))}\,dt,
\]
where the supremum is taken over all admissible paths going from the point $x_0$ to the point $x_1$:
\[
	x(0)=x_0, \quad x(1)=x_1, \quad \dot x(t)\in C^+_{x(t)}\text{ for a.e.\ }t\in[0, 1].
\]
As usual, if the supremum is taken over empty set, then we put $\sup{\varnothing}=-\infty$.
\bigskip

This article is dedicated to the following question: for given two points $x_0$ and $x_1$, does there exist an admissible path on which the mentioned supremum is attained? If such a path exists, it is referred to as the \textit{longest}. It is important to note that, in general, $\dist_{L}(x_0,x_1)\neq\dist_{L}(x_1,x_0)$. Therefore, we will always specify ``from point $x_0$ to point $x_1$'' to emphasize the direction of parameter progression along the path.

It turns out that the question of existence can be addressed in a nearly identical manner for both Lorentzian and sub-Lorentzian structures. Therefore, in this work, we present an existence theorem that applies simultaneously to both cases.

\section{Sub-Lorentzian manifolds}

A sub-Lorentzian structure on a smooth manifold $M$ was firstly introduced by M.~Grochowski~\cite{grochowski1} as a pair of a subbundle $\Delta \subset TM$
(called \emph{a distribution}) with a fiber dimension of $r+1$ and a non-degenerate quadratic form $g$ of signature $(1, r)$ on it,
smoothly depending on a manifold's point. The quadratic form $g$ determines a family of cones $C_x \subset \Delta_x \subset T_xM$ of non-spacelike tangent vectors.
Admissible curves and their length can be defined a similar way to the Lorentzian case.

We consider a wider notion of a sub-Lorentzian structure as a generalization of the Lorentzian structure in two natural directions. Firstly, the circular cone $C^+_x$ is replaced by an arbitrary non-empty convex cone in $T_xM$, and secondly, the quadratic form $g$ is replaced by an arbitrary anti-norm on this cone. It is worth noting that we also consider cases where the cone $C^+_x$ is not necessarily solid and allow it to have an empty interior.

\begin{definition}\label{def:antinorm}
	
	An \emph{anti-norm} on the cone $C^+_x$ is defined as an upper semicontinuous function $\nu_x : T_xM \rightarrow \R \sqcup \{-\infty\}$ such that
	\begin{enumerate}
		\item[(1)] $\nu_x|_{C^+_x} \ge 0$ and $\nu_x(T_xM \setminus C^+_x) = -\infty$;
		\item[(2)] $\forall\, \xi \in T_xM \ \ \forall\, \lambda > 0 \ \ \nu_x(\lambda\xi) = \lambda\nu_x(\xi)$;
		\item[(3)] $\forall\, \xi,\zeta \in T_xM \ \ \nu_x(\xi+\zeta) \geq \nu_x(\xi) + \nu_x(\zeta)$.
	\end{enumerate}
	
\end{definition}

Note that, cone $C^+_x$ must be closed due to upper semicontinuity of the antinorm. Moreover, if $\nu_x|_{C^+_x}\not\equiv0$, then $\nu_x|_{\ri{C^+_x}} > 0$ where $\ri{C^+_x}$ denotes the relative interior of the cone $C^+_x$. Indeed, if a concave function attains its minimum at a point lying in the relative interior of its domain, then this function is constant on the domain.

\begin{definition}
	
	Let $M$ be a smooth manifold, and at each point $x \in M$, let there be given a non-empty, convex, and pointed cone\footnote{Meaning the apex of the cone is $0\in T_xM$, and the cone $C^+_x$ does not contain any lines.} $C^+_x \subset T_xM$ equipped with an anti-norm $\nu_x$. We will refer to the triple $(M, C^+, \nu)$ as a \emph{sub-Lorentzian manifold}.
	
\end{definition}

In the Lorentzian case, the anti-norm $\nu$ is simply the square root of the vector length (which is non-negative on the non-spacelike cone):
\[
	\forall\, \xi\in C^+_x \quad \nu_x(\xi)\eqdef\sqrt{g_x(\xi,\xi)}.
\]
In the general case, however, the anti-norm can have any nature and is not required to be quadratic.

It is worth noting that in the definition of a sub-Lorentzian manifold, we do not impose any requirements on the continuity of the objects $C^+_x$ and $\nu_x$ with respect to the point $x$. However, it is evident that complete absence of continuity requirements is not feasible. Therefore, we have specified these requirements in a separate definition.

\begin{definition}
\label{def:regularmanifold}
	A sub-Lorentzian manifold is called \textit{regular}, if the following two conditions are satisfied.
	\begin{enumerate}
		\item[(1)] The set $\mathcal{C}^+=\bigcup\limits_{x\in M}C^+_x\subset TM$ is closed.
		\item[(2)] The anti-norm $\nu$ is upper semicontinuous as a function from $TM$ to $\R\sqcup\{-\infty\}$.
	\end{enumerate}
\end{definition}

\begin{remark}
\label{rem:reg}
	Both of these conditions are inherently satisfied if the sub-Lorentzian structure is constructed as follows. Let $C^+ \subset \R^m$ be a closed pointed cone, and $\nu$ be a closed anti-norm on it. Suppose $\Phi: M \times \R^m \to TM$ is a continuous map that is fiberwise linear and non-degenerate in each fiber, i.e., $\Phi(x) \in \mathrm{Lin}(\R^m \to T_xM)$ and $\det \Phi(x) \neq 0$. Then, the following sub-Lorentzian structure with $C^+_x = \Phi(x)(C^+)$ and $\nu_x(\xi) = \nu(\Phi(x)^{-1}(\xi))$ is regular. A typical example of such a structure is an invariant structure on a homogeneous space $M$ of a Lie group $G$. To define such a structure, it is sufficient to specify a closed pointed cone $C^+_{x_0} \subset T_{x_0}M$ and an anti-norm $\nu_{x_0}$ on it, both invariant under the stabilizer $G_{x_0} = \{g \in G \,|\, g(x_0) = x_0\}$ of the point $x_0$. In particular, a left-invariant sub-Lorentzian structure on a Lie group is regular.

\end{remark}

The length on a sub-Lorentzian manifold is defined in a similar manner
\[
	\dist_{SL}(x_0,x_1) = \sup\int_0^1 \nu_{x(t)}(\dot x(t))\,dt,
\]
where the supremum is taken over all admissible paths going from the point $x_0$ to the point $x_1$:
\[
	x(0)=x_0, \quad x(1)=x_1, \quad \dot x(t)\in C^+_{x(t)}\text{ for a.e.\ }t\in[0, 1].
\]

\section{Existence Theorem}

To simplify the formulation, let us assume that $M$ is equipped with a Riemannian manifold structure (for instance, if $M=\R^{n+1}$, the standard Euclidean structure suffices). We will denote the lengths of vectors $\xi \in T_xM$ in this structure as $|\xi|$, and the distance between points $x, y \in M$ as $\dist_R(x, y)$.

\begin{theorem}\label{th:exist}
	
	Let $(M,C^+,\nu)$ be a regular sub-Lorentzian manifold, and $M$ is a complete Riemannian manifold. Let $A \in M$ be any fixed point. Suppose there exists a 1-form $\tau \in \Lambda^1(M)$ \emph{(}time orientation\emph{)} such that
	\begin{enumerate}
		
		\item[\emph{(1)}] $\forall x\in M$ $\forall \xi\in C^+_x$ we have\footnote{In particular, $\tau_x(\xi)>0$ for all $0\ne\xi\in C^+_x$.} $\tau_x(\xi)\ge |\xi|/(1+\dist_R(A,x))$,
		\item[\emph{(2)}] $d\tau = 0$.

	\end{enumerate}
	Assume that $H^1(M) = 0$. Then there exists a longest path going from a point $x_0 \in M$ to a point $x_1 \in M$ if and only if there is at least one admissible path from $x_0$ to $x_1$.
	
\end{theorem}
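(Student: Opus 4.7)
The ``only if'' direction is trivial, so we focus on producing a longest path as the limit of a maximizing sequence of admissible curves.

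\emph{Time function and reparameterization.} Since $d\tau=0$ and $H^1(M)=0$, pick $T\colon M\to\R$ with $dT=\tau$. Along any admissible $\gamma$, condition (1) gives $\frac{d}{dt}(T\circ\gamma)=\tau_\gamma(\dot\gamma)\ge 0$, with equality only where $\dot\gamma=0$. Set $\Delta T:=T(x_1)-T(x_0)$. If $\Delta T=0$, any admissible path from $x_0$ to $x_1$ must have $\dot\gamma=0$ a.e., forcing $x_0=x_1$ and the constant path is trivially longest (of length zero, since $\nu_x(0)=0$ by homogeneity). Henceforth assume $\Delta T>0$. For a maximizing sequence $\{\gamma_n\}$ of admissible paths, collapse the zero-velocity subintervals (which do not affect the length) and rescale so that $\gamma_n\colon[0,\Delta T]\to M$ satisfies $T(\gamma_n(s))=T(x_0)+s$, i.e.\ $\tau(\dot\gamma_n)\equiv 1$ a.e.

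\emph{Compactness.} By condition (1), $|\dot\gamma_n(s)|\le 1+\dist_R(A,\gamma_n(s))$. Gronwall's inequality then yields the uniform bound $\dist_R(A,\gamma_n(s))\le (1+\dist_R(A,x_0))e^{\Delta T}-1=:R$, so all curves lie in the Riemannian ball $\overline{B_R(A)}$, which is compact by Hopf--Rinow. The Lipschitz bound $|\dot\gamma_n|\le 1+R$, combined with Arzela--Ascoli and Banach--Alaoglu, produces a subsequence (not relabelled) with $\gamma_n\to\gamma$ uniformly and $\dot\gamma_n\rightharpoonup\dot\gamma$ weakly-$*$ in $L^\infty$. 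Admissibility of $\gamma$ follows by Mazur's lemma: closedness of $\mathcal C^+$ yields upper hemicontinuity of $x\mapsto C^+_x$ on compacta, so each convex average $\sum_k\alpha_{N,k}\dot\gamma_k(s)$ eventually lies in any prescribed neighborhood $C^+_{\gamma(s)}+B_\varepsilon$ of the convex closed cone $C^+_{\gamma(s)}$, forcing $\dot\gamma(s)\in C^+_{\gamma(s)}$ a.e.

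\emph{Main obstacle: upper semicontinuity of the length functional.} The decisive step is to establish
\[
\limsup_{n\to\infty}\int_0^{\Delta T}\nu_{\gamma_n(s)}(\dot\gamma_n(s))\,ds\ \le\ \int_0^{\Delta T}\nu_{\gamma(s)}(\dot\gamma(s))\,ds.
\]
Upper semicontinuity of $\nu$ alone would suffice under strong convergence of velocities, but we have only weak convergence, so the concavity of $\nu_x(\cdot)$ in the fiber must be used. By Mazur, convex combinations $\tilde v_N=\sum_{k\ge N}\alpha_{N,k}\dot\gamma_k$ converge to $\dot\gamma$ strongly in $L^1$, hence a.e.\ along a further subsequence. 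Concavity and positive homogeneity bound $\nu_{\gamma(s)}(\tilde v_N(s))$ from below by the corresponding weighted average of $\nu_{\gamma(s)}(\dot\gamma_k(s))$, while the uniform convergence $\gamma_n\to\gamma$ together with USC of $\nu$ on $TM$ compares $\nu_{\gamma(s)}$ to $\nu_{\gamma_k(s)}$ up to a controllable error; an upper Fatou lemma then delivers the displayed inequality. Equivalently, this is the classical Ioffe--Olech upper semicontinuity theorem for integral functionals with concave--USC integrands. Combining this with admissibility of $\gamma$ and the maximizing property $L(\gamma_n)\to\dist_{SL}(x_0,x_1)$ forces $L(\gamma)=\dist_{SL}(x_0,x_1)$, so $\gamma$ is a longest path.
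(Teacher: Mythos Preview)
Your approach is correct and genuinely different from the paper's. Both proofs share the same opening move: use exactness of $\tau$ to define a global time function, reparameterize so that $\tau(\dot\gamma)\equiv 1$, and obtain a uniform Riemannian bound on speed via the sublinear-growth condition (1) (the paper states this as $|\xi|\le 1+\dist_R(A,x)$ for $\xi\in U_x$, you use Gronwall explicitly). From there the paper takes the Filippov route: it adjoins an auxiliary state $z$ with $0\le z'\le \nu_x(x')$, converts the integral functional into a terminal cost, verifies that the resulting right-hand side $V_x=\{(\xi,\zeta):\xi\in U_x,\ 0\le\zeta\le\nu_x(\xi)\}$ is convex, compact, and upper semicontinuous in $x$, and then invokes Filippov's compactness theorem for differential inclusions. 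You instead run the direct method: Arzel\`a--Ascoli plus weak-$*$ compactness for the maximizing sequence, Mazur for admissibility of the limit, and Ioffe--Olech for upper semicontinuity of the length functional. Each approach hides the hard weak-convergence analysis inside a different black box (Filippov versus Ioffe--Olech); the paper's version is arguably cleaner because the $z$-trick reduces everything to reachable-set compactness, while yours is more transparent about exactly where concavity of $\nu_x(\cdot)$ enters.

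One caveat: your Mazur sketch for the upper semicontinuity of the functional is too loose to stand on its own. The inequality $\nu_{\gamma(s)}(\tilde v_N(s))\ge\sum_k\alpha_{N,k}\,\nu_{\gamma(s)}(\dot\gamma_k(s))$ is vacuous whenever $\dot\gamma_k(s)\notin C^+_{\gamma(s)}$ (the right-hand side becomes $-\infty$), and the subsequent claim that upper semicontinuity of $\nu$ on $TM$ lets you replace $\nu_{\gamma(s)}(\dot\gamma_k(s))$ by $\nu_{\gamma_k(s)}(\dot\gamma_k(s))$ ``up to a controllable error'' is not what USC delivers---USC bounds $\nu$ at nearby points from \emph{above}, not from below. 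Fortunately the Ioffe--Olech theorem you cite is exactly the right tool here (applied to the normal convex integrand $-\nu$, bounded below on the relevant compact subset of $TM$), so the proof goes through; just drop or repair the heuristic Mazur paragraph and lean on Ioffe--Olech directly.
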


Before proceeding with the proof of Theorem~\ref{th:exist}, let's provide an example of its application.

\begin{example}\label{ex:hyperbolicplane}
   	Let's consider left-invariant Lorentzian structures on the semidirect product $G = \R \ltimes \R_+$, where $\R_+ = \{x \in \R \,|\, x > 0\}$. This group can be identified with the hyperbolic Lobachevsky plane. Such structures have been studied in works~\cite{sachkov-l1,sachkov-l2}, see also~\cite[\S~3.2]{sachkov}. It turns out that for some of these structures, the question of the existence of longest paths can be resolved using Theorem~\ref{th:exist}, without resorting to the study of extremal trajectories and reachable sets, as was done in the mentioned works.

   Consider the standard Lobachevsky structure $(dx^2+dy^2)/y^2$ (which is left-invariant) as the Riemannian metric on the group $G$. The left-invariant sub-Lorentzian structure is defined by a cone $C^+_{\id} \subset T_{\id}G$ in the tangent space to the identity $\id = (0, 1) \in G$. The group multiplication law in $G$ is given by
    \[
        (x_1,y_1) \cdot (x_2,y_2) = (x_1 + y_1x_2, y_1y_2), \qquad x_1,x_2 \in \R, \qquad y_1,y_2 \in \R_+.
    \]
    Then, $C^+_{(x,y)} = \{y\xi \,|\, \xi \in C^+_{\id}\}$. Choose a covector $\tau_{\id} = (a, b) \in T_{\id}^*G$. By multiplying it by a suitable number, we can assume that $\frac{|\xi|}{\tau_{\id}(\xi)} < 1$ for $\xi \in C^+_{\id}$, where $|\xi|$ is the length of $\xi$ with respect to the Riemannian structure. When this covector is spread by left shifts, we obtain the 1-form $\tau = \frac{1}{y}\left(adx + bdy\right)$. For $y\xi \in C^+_{(x,y)}$, we have
    \[
        \frac{|y\xi|}{\tau_{(x,y)}(y\xi)} = \frac{|\xi|}{\tau_{\id}(\xi)} < 1,
    \]
    wherefrom it follows that condition (1) of Theorem~\ref{th:exist} is satisfied. Condition (2) takes the form $d\tau = \frac{a}{y^2} dx \wedge dy = 0$, and it holds if and only if $a = 0$. Therefore, by Theorem~\ref{th:exist}, for any cone $C^+_{\id}$ intersecting the line $\{(*,0)\} \subset T_{\id}G$ only at the apex, there will exist longest paths on the reachable set from the point $\id$ in the corresponding Lorentzian problem.
\end{example}

\begin{proof}[Proof of Theorem~\emph{\ref{th:exist}}]
	
	Throughout the entire proof, we will actively use the Hopf--Rinow theorem, according to which, on a complete Riemannian manifold, every bounded closed set is compact. We will use this fact for both the manifolds $M$ and $TM$ (the completeness of $TM$ immediately follows from the completeness of $M$).
	
	The existence of a longest path is equivalent to the existence of an optimal solution in the following optimal control problem:
	\begin{equation}
	\label{eq:initial problem}
		\begin{gathered}
			\ell(x) = \int_0^1\nu_{x(t)}(\dot x(t))\,dt\to\max,\\
			x|_{t=0}=x_0, \quad x|_{t=1}=x_1, \quad \dot x(t)\in C^+_{x(t)}.
		\end{gathered}
	\end{equation}
	
	Let's introduce a new parameter $s$ on the path as follows:
	\[
		s(t) = \int_0^t \tau_{x(t_1)}(\dot x(t_1))\,dt_1.
	\]
	Note that $\tau_x(C^+_x) \geq 0$, so $s(t)$ is a monotonically non-decreasing function.
		
	It is important to note that $s_1 = s(1)$ does not depend on the choice of the path but only on its endpoints $x_0$ and $x_1$. Indeed, the first de Rham cohomologies are zero according to the theorem's assumption, $H^1(M) = 0$, so the integral of a closed 1-form $\tau$ over any closed curve is zero.
	
	Therefore, the aforementioned optimal control problem~\eqref{eq:initial problem} is equivalent to the following:
	\begin{equation}
	\label{eq:reparametrized problem}
		\begin{gathered}
			\ell(x) = \int_0^{s_1} \nu_{x(s)}(x'(s))\,ds\to\max,\\
			x|_{s=0}=x_0, \quad x|_{s=s_1}=x_1, \quad x'(s)\in C^+_{x(s)}, \quad \tau_{x(s)}(x'(s))=1,
		\end{gathered}
	\end{equation}
	where $x'$ denotes the derivative with respect to $s$ to avoid confusion with the derivative with respect to $t$: $x'=\frac{d}{ds}x$, $\dot x=\frac{d}{dt}x$.
	
	It is convenient to express the last two velocity constraints on $x'(s)$ in the problem~\eqref{eq:reparametrized problem} as a single differential inclusion:
	\[
		x' \in U_x = \{\xi\in C^+_x \,|\, \tau_x(\xi)=1\}.
	\]
	
	\begin{lemma}\label{lem:compactsection}
		For each $x\in M$, the set $U_x\subset T_xM$ is convex and compact.
	\end{lemma}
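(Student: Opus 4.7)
The plan is to verify the two properties separately, both essentially by soft arguments using the structure already set up.

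For convexity, I would observe that $C^+_x$ is convex by hypothesis (part of the definition of a sub-Lorentzian manifold), and the set $\{\xi \in T_xM \mid \tau_x(\xi) = 1\}$ is an affine hyperplane in $T_xM$, hence convex. The intersection of two convex sets is convex, so $U_x$ is convex.

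For compactness, the goal is to check that $U_x$ is closed and bounded in the finite-dimensional vector space $T_xM$. Closedness is immediate: $C^+_x$ is closed (since the manifold is regular, $\mathcal{C}^+\subset TM$ is closed, and therefore so is each fiber $C^+_x$; alternatively, the upper semicontinuity of $\nu_x$ in Definition~\ref{def:antinorm} already forces $C^+_x$ to be closed), and the hyperplane $\{\tau_x = 1\}$ is closed since $\tau_x$ is a continuous linear functional. For boundedness, I would use hypothesis~(1) of Theorem~\ref{th:exist}: for every $\xi \in U_x$,
\[
    1 \;=\; \tau_x(\xi) \;\geq\; \frac{|\xi|}{1+\dist_R(A,x)},
\]
which gives $|\xi| \leq 1+\dist_R(A,x)$. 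Since $x$ is fixed, this is a finite constant, so $U_x$ is bounded. A closed bounded subset of the finite-dimensional Euclidean space $T_xM$ is compact.

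There is no real obstacle here; the lemma is essentially a sanity check that the reparametrized constraint set is well-behaved. The one thing to be careful about is invoking the right reason for closedness of $C^+_x$ — either the regularity assumption or the upper semicontinuity of $\nu_x$ suffices — and making explicit that hypothesis~(1) of the theorem is precisely what rules out the possibility that $U_x$ extends to infinity along directions in $C^+_x$ on which $\tau_x$ stays bounded.
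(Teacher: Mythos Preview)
Your proof is correct, and the convexity and closedness arguments match the paper's exactly. For boundedness, however, the paper argues by contradiction using only the qualitative fact $\tau_x(C^+_x\setminus\{0\})>0$: if $\xi_n\in U_x$ with $|\xi_n|\to\infty$, the normalized sequence $\xi_n/|\xi_n|$ subconverges in $C^+_x\setminus\{0\}$ to a vector on which $\tau_x$ vanishes, a contradiction. Your direct use of hypothesis~(1) of Theorem~\ref{th:exist} is shorter and immediately yields the explicit bound $|\xi|\le 1+\dist_R(A,x)$ --- which is exactly the estimate~\eqref{eq:xi estimate} the paper states and uses later anyway. The paper's route has the minor advantage of isolating that compactness of $U_x$ needs only strict positivity of $\tau_x$ on the cone, not the full quantitative growth condition; yours has the advantage of getting the uniform-in-$x$ bound in the same breath.
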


	\begin{proof}
		First, let's show that for each $x\in M$, the set $U_x\subset T_xM$ is convex and closed. Indeed, the hyperplane $\{\xi\in T_xM \,|\, \tau_x(\xi)=1\}$ is convex and closed. The cone $C^+_x$ is convex by the definition of a sub-Lorentzian manifold and closed according to point~(1) of Definition~\ref{def:regularmanifold} of a regular sub-Lorentzian manifold.
		
		Furthermore, for each $x\in M$, the set $U_x$ is bounded. Indeed, suppose the contrary, i.e., there exists a sequence $\xi_n \in U_x$ such that $|\xi_n| \rightarrow +\infty$. For the sequence $\bar{\xi}_n = \frac{\xi_n}{|\xi_n|} \in C^+_x$ we have $|\bar{\xi}_n| = 1$. Passing to a subsequence, due to the closedness of the cone $C^+_x$, we can assume that $\bar{\xi}_n \rightarrow \bar{\xi} \in C^+_x \setminus {0}$. In this case, $\tau_x(\bar{\xi}_n) = \frac{\tau_x(\xi_n)}{|\xi_n|} = \frac{1}{|\xi_n|} \rightarrow 0$, but $\tau_x(\bar{\xi}_n) \rightarrow \tau_x(\bar{\xi})$. Therefore, $\tau_x(\bar{\xi}) = 0$, which contradicts the condition $\tau_x(C^+_x \setminus {0}) > 0$.
	\end{proof}
	
	\begin{lemma}\label{lem:semicont}
		The set $U_x$ is upper semicontinuous \emph{(}in the sense of Filippov\emph{)} with respect to $x$ as a set-valued map $x\mapsto U_x$.
	\end{lemma}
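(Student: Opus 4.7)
The plan is to verify upper semicontinuity in Filippov's sense by establishing two ingredients: (a) the graph of the multifunction $x \mapsto U_x$ is closed in $TM$, and (b) the multifunction is locally bounded, so that its images lie in a compact set on any relatively compact neighborhood of $x_0$. Standard results in set-valued analysis then imply upper semicontinuity: whenever $x_n \to x_0$ and $V \supset U_{x_0}$ is an open neighborhood, $U_{x_n} \subset V$ for all sufficiently large $n$.

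For graph closedness, I would take a sequence $(x_n, \xi_n) \to (x_0, \xi_0)$ in $TM$ with $\xi_n \in U_{x_n}$. Since $\xi_n \in C^+_{x_n}$, the pair $(x_n, \xi_n)$ lies in $\mathcal{C}^+$, which is closed by condition (1) of Definition~\ref{def:regularmanifold}, so $\xi_0 \in C^+_{x_0}$. The 1-form $\tau$ is smooth, hence continuous as a function $TM \to \R$, so $\tau_{x_n}(\xi_n) = 1$ forces $\tau_{x_0}(\xi_0) = 1$. Thus $\xi_0 \in U_{x_0}$.

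For local boundedness, I would use condition (1) of Theorem~\ref{th:exist} directly. For any $x \in M$ and any $\xi \in U_x$,
\[
	|\xi| \le \bigl(1 + \dist_R(A,x)\bigr)\,\tau_x(\xi) = 1 + \dist_R(A,x).
\]
Fixing a compact set $K \subset M$ (for instance a closed Riemannian ball about $x_0$), the right-hand side is uniformly bounded for $x \in K$ by some constant $R_K$. Hence $\bigcup_{x \in K} U_x$ is contained in the closed $R_K$-ball of the bundle $TM|_K$, which is compact by Hopf--Rinow.

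With these two properties in hand, Filippov upper semicontinuity follows by a standard argument: if it failed, there would be an open $V \supset U_{x_0}$ and sequences $x_n \to x_0$, $\xi_n \in U_{x_n} \setminus V$. Local boundedness lets me extract a subsequence $\xi_n \to \xi_\infty$, and graph closedness puts $\xi_\infty \in U_{x_0} \subset V$, contradicting $\xi_n \notin V$ (which is closed in a neighborhood of $x_0$). I do not anticipate a genuinely hard step; the main point is simply to exploit the quantitative bound on $|\xi|$ coming from assumption (1) of Theorem~\ref{th:exist}, which is exactly what prevents the section $U_x$ from escaping to infinity as $x$ varies.
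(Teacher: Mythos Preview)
Your proof is correct and follows essentially the same route as the paper: closedness of the graph via the closedness of $\mathcal{C}^+$ and continuity of $\tau$, local boundedness via the estimate $|\xi|\le 1+\dist_R(A,x)$ from condition~(1) of Theorem~\ref{th:exist}, and then the standard implication from closed graph plus local boundedness to Filippov upper semicontinuity (the paper cites Filippov's Lemma~14 for this last step, while you spell it out).
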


	\begin{proof}
		Indeed, the graph of the set-valued map $x\mapsto U_x$ (as a subset of $TM$) is the intersection of the closed set $\mathcal{C}^+=\bigcup\limits_{x\in M}C^+_x$ and the closed set $\{\xi \,|\, \tau_x(\xi)=1\}\subset TM$. Furthermore, in the neighborhood of any point in $M$, the sets $U_x$ are bounded due to the inequality
		\begin{equation}
		\label{eq:xi estimate}
			\forall\, \xi\in U_x\qquad |\xi|\le 1+\dist_R(A,x).
		\end{equation}
		It remains to note that, according to~\cite[Ch.~2, \S\,5, Lemma~14]{Filippov}, such a set-valued map must be upper semicontinuous.
	\end{proof}

	Let us consider the following optimal control problem extended to the manifold $M \times \R$. The goal is to find a curve
$\left(x(s), z(s)\right) \in M \times \R$, where $s \in [0, s_1]$, such that
	\begin{equation}
	\label{eq:auxiliary problem}
		\begin{gathered}
			z(s_1)\to\max,\\
			x(0)=x_0, \quad z(0)=0, \quad x(s_1) = x_1,\\
			(x'(s),z'(s))\in V_{x(s)} = \{(\xi,\zeta) \,|\, \xi\in U_{x(s)},\ 0\le \zeta\le \nu_{x(s)}(\xi)\}\subset T_{x(s)}M\times\R.
		\end{gathered}
	\end{equation}
	
	The problem~\eqref{eq:auxiliary problem} is equivalent to the problem~\eqref{eq:reparametrized problem} (and, consequently, the original~\eqref{eq:initial problem}), as $z(s)$ grows no faster than $\nu_x(x'(s))$. Let's demonstrate that there exists an optimal solution for the problem~\eqref{eq:auxiliary problem}.
	
	\begin{lemma}
	
		The set $V_x$ is convex and compact for every $x\in M$ and is upper semicontinuous \emph{(}in the sense of Filippov\emph{)} as a set-valued map $x\mapsto V_x$.
	
	\end{lemma}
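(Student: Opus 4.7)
The argument has three parts: convexity, compactness (both fiberwise), and upper semicontinuity of the set-valued map $x \mapsto V_x$. The strategy is to reduce everything to the properties already established for $U_x$ in Lemmas~\ref{lem:compactsection} and~\ref{lem:semicont} together with the defining properties of the antinorm $\nu_x$.

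For convexity of $V_x$, I would first observe that properties (2) and (3) in Definition~\ref{def:antinorm} imply that $\nu_x$ is concave on the convex cone $C^+_x$: for $\xi,\zeta \in C^+_x$ and $\lambda \in [0,1]$, positive homogeneity rewrites $\nu_x(\lambda\xi + (1-\lambda)\zeta)$ and superadditivity gives the concavity inequality. Hence the hypograph $\{(\xi,\zeta) : \xi \in C^+_x,\ \zeta \le \nu_x(\xi)\}$ is convex; intersecting with the convex slab $\{\tau_x(\xi)=1,\ \zeta \ge 0\}$ yields $V_x$, which is therefore convex.

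For compactness, note that $V_x$ sits inside $U_x \times [0,\infty)$, with $U_x$ compact by Lemma~\ref{lem:compactsection}. The antinorm $\nu_x$ is upper semicontinuous and nonnegative on $C^+_x$, so its restriction to the compact set $U_x$ attains a finite maximum; this bounds the $\zeta$-coordinate and gives boundedness. Closedness follows because the hypograph of an upper semicontinuous function over a closed set is closed, and $U_x$ is closed; together these give compactness.

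The last part is the upper semicontinuity of $x \mapsto V_x$ in Filippov's sense. Again I would apply Lemma 14 in \S5 of Chapter~2 of~\cite{Filippov}: it suffices to check that the graph $\mathcal{V} = \bigcup_{x\in M} (\{x\} \times V_x) \subset TM \times \R$ is closed and that $V_x$ is locally bounded in $x$. For closedness take $(x_n, \xi_n, \zeta_n) \to (x, \xi, \zeta)$ with $(\xi_n, \zeta_n) \in V_{x_n}$. Lemma~\ref{lem:semicont} (whose proof used that the graph of $U$ is closed) gives $\xi \in U_x$; clearly $\zeta \ge 0$; and upper semicontinuity of $\nu$ on $TM$ combined with $\zeta_n \le \nu_{x_n}(\xi_n)$ gives
\[
\zeta = \lim_{n\to\infty} \zeta_n \le \limsup_{n\to\infty} \nu_{x_n}(\xi_n) \le \nu_x(\xi),
\]
so $(\xi,\zeta) \in V_x$. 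Local boundedness is immediate: the estimate~\eqref{eq:xi estimate} bounds $|\xi|$ locally in $x$, so the $\xi$-component ranges over a relatively compact set, and then upper semicontinuity of $\nu$ bounds $\zeta$ from above on this set. The main (mild) subtlety is that $\nu$ is only upper semicontinuous rather than continuous, but this is exactly what is needed to preserve the inequality $\zeta \le \nu_x(\xi)$ in the limit, so no further work is required.
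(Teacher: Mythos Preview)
Your proposal is correct and follows essentially the same approach as the paper: concavity of $\nu_x$ gives convexity of the hypograph, compactness of $U_x$ plus upper semicontinuity of $\nu_x$ gives compactness of $V_x$, and closedness of the graph (via Lemma~\ref{lem:semicont} for the $\xi$-component and upper semicontinuity of $\nu$ on $TM$ for the $\zeta$-component) yields Filippov upper semicontinuity through the cited Lemma~14. Your explicit check of local boundedness is a welcome addition that the paper leaves implicit.
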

	
	\begin{proof}
		
		The function $\nu_x(\xi)$ is concave and closed on the convex set $U_x$, so its hypograph $\{(\xi,\zeta) \,|\, \xi\in U_x, \, a\le \nu_x(\xi)\}$ is convex and closed by definition. The set $V_x$ is the intersection of this hypograph with the half-space $\zeta\ge 0$. Thus, the set $V_x$ is convex and closed as the intersection of two convex closed sets. Moreover, the set $V_x\subset T_xM\times\R$ is bounded. Indeed, the set $U_x$ is compact, so the upper semicontinuous function $\nu_x(\xi)$ attains its maximum over $\xi\in U_x$. Let's denote it by $Z_x$. Thus, $V_x\subset U_x\times[0, Z_x]$, and the latter set is bounded as the product of two compact sets. Therefore, the set $V_x$ is compact.
		
		Now let's show that the set-valued map $x\mapsto V_x$ is upper semicontinuous. According to~\cite[Ch.~2, \S\,5, Lemma~14]{Filippov}, it suffices to verify that the graph of this set-valued map is closed. Suppose $(\xi_k,\zeta_k)$ is a sequence of elements in the graph converging to some element $(\xi,\zeta)\in TM\times\R$. We will demonstrate that the point $(\xi,\zeta)$ also belongs to the graph.
		
		By the definition of the set-valued map, $(\xi_k,\zeta_k)\in V_{x_k}$ for some points $x_k\in M$. Clearly, $x_k=\pi(\xi_k)$, where $\pi:TM\to M$ is the canonical projection onto the base. Therefore, the sequence $x_k$ is also convergent, $x_k\to x$, and $x=\pi(\xi)$, i.e., $\xi\in T_xM$. According to Lemma~\ref{lem:semicont}, $\xi \in U_x \subset T_xM$. The function $\nu:TM\to\R\sqcup\{-\infty\}$ is upper semicontinuous, so
		\[
			\nu_x(\xi) \ge \limsup_{k\to\infty}\nu_{x_k}(\xi_k) \ge \limsup_{k\to\infty}\zeta_k = \zeta,
		\]
		and we obtain $0\le\zeta\le \nu_x(\xi)$. Therefore, $(\xi,\zeta)\in V_x$, which proves the closedness of the graph.
	\end{proof}

    The reachable set from the point $(x_0, 0)$ in time less or equal to $s_1$ for the control system of problem~\eqref{eq:auxiliary problem} is compact according to Filippov's theorem (see~\cite[Theorem 3, \S7, Chapter 2]{Filippov}). Indeed, the existence of a solution on small intervals follows from Theorem 1 in \cite[\S7, Chapter 2]{Filippov}, and the extendability of the solution to any length of time interval follows from Theorem 2 in \cite[\S7, Chapter 2]{Filippov} and the condition of sublinear growth~\eqref{eq:xi estimate}. Note that Filippov proved his theorems for the case of $M=\R^n$, but his proofs can be applied verbatim by replacing $\R^n$ with any complete Riemannian manifold.
	
	It remains to note that the intersection of the reachable set from the point $(x_0, 0)$ in time not exceeding $s_1$ of the control system of problem~\eqref{eq:auxiliary problem} with the set $\{x_1\} \times \R$ is compact as well. Indeed, this intersection is non-empty (since the point $x_1$ can be reached from the point $x_0$ by an admissible path of problem~\eqref{eq:initial problem} due to the assumption) and the set $\{x_1\} \times \R$ is closed. So, the projection of this intersection to $\R$ has the greatest element $z_1$. It follows, that there exists an admissible path of problem~\eqref{eq:auxiliary problem} from the point $(x_0, 0)$ to the point $(x_1, z_1)$ which is an optimal solution.
\end{proof}

\section{Invariant sub-Lorentzian structures}
\label{sec:leftinvariant}

The conditions of the existence theorem for invariant sub-Lorentzian structures on homogeneous spaces of Lie groups can be significantly simplified. We will assume that an invariant Riemannian structure is defined on the homogeneous manifold $M$ with respect to the action of a Lie group $G$, i.e., a positively definite $G$-invariant quadratic form $g^R$. We will denote by $|\xi| = \sqrt{g^R_x(\xi, \xi)}$ the length of a tangent vector $\xi \in T_xM$ with respect to this structure.

\begin{theorem}
    \label{th:leftinv}

	Let $(M, C^+, \nu)$ be an invariant sub-Lorentzian structure on a homogeneous space $M$ with respect to an action of a Lie group $G$, where $M$ is a homogeneous Riemannian manifold. Fix a point $x_0 \in M$. Assume that the covector $\tau_{x_0} \in T^*_{x_0}M$ is invariant under the stabilizer $G_{x_0}$
and $\tau_{x_0}|_{C^+_{x_0} \setminus 0} > 0$. Define the 1-form $\tau \in \Lambda^1 M$ using the shifts of the covector $\tau_{x_0}$ as follows:
	\[
		\forall\, \xi \in T_xM \quad \tau_x(\xi) = \tau_{x_0}(g^{-1}_* \xi),
	\]
	where $g \in G$ is such that $g x_0 = x$. Suppose that $\tau$ is an exact 1-form on the reachable set $\A$ from the point $x_0$ along admissible paths. Then there exists a longest path from $x_0$ to $x_1$ if and only if there exists at least one admissible path from $x_0$ to $x_1$.

\end{theorem}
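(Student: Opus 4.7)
The plan is to reduce to Theorem~\ref{th:exist}, replacing the hypotheses $d\tau=0$ and $H^1(M)=0$ by the weaker assumption of exactness of $\tau$ on the reachable set $\A$. First, I would establish a uniform lower bound $\tau_x(\xi) \ge c\,|\xi|$ on each cone $C^+_x$. By $G$-invariance of the Riemannian metric, the cone field, and the 1-form $\tau$, the ratio $\tau_x(\xi)/|\xi|$ depends only on the direction of $g^{-1}_* \xi \in C^+_{x_0}$. The assumption $\tau_{x_0}(C^+_{x_0}\setminus\{0\}) > 0$ together with compactness of the unit sphere inside the closed pointed cone $C^+_{x_0}$ then yields a positive minimum $c > 0$. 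Rescaling $\tau_{x_0}$ by $1/c$, I may assume $\tau_x(\xi) \ge |\xi|$, which is stronger than condition~(1) of Theorem~\ref{th:exist} for any choice of base point $A$.

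Next, I would observe that an invariant structure is regular by Remark~\ref{rem:reg}, and any homogeneous Riemannian manifold is complete; these handle the remaining geometric hypotheses of Theorem~\ref{th:exist}. I would then trace through the proof of Theorem~\ref{th:exist} and note that the assumptions $d\tau=0$ and $H^1(M)=0$ were used there only to ensure that $s_1 = \int_0^1 \tau_{x(t)}(\dot x(t))\,dt$ depends only on the endpoints of the admissible path. Our hypothesis $\tau|_\A = df$ supplies exactly this: any admissible path from $x_0$ to $x_1$ stays inside $\A$, hence $s_1 = f(x_1) - f(x_0)$. With this replacement, the passage to the reparametrized and auxiliary optimal control problems, Lemmas~\ref{lem:compactsection}--\ref{lem:semicont}, and the final appeal to Filippov's existence theorem all apply verbatim. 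In fact the uniform bound gives $|\xi|\le 1$ for $\xi \in U_x$, so the sublinear-growth step becomes trivial.

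The most delicate aspect is interpreting exactness on $\A$ when $\A$ is not known to be a smooth manifold. All that is actually needed, however, is that $\int_\gamma \tau$ be the same for any two admissible paths $\gamma$ sharing endpoints in $\A$---the standard path-integral meaning of exactness---and this is precisely what the primitive $f$ provides. So no further regularity of $\A$ is required, and the existence of a longest path from $x_0$ to $x_1$ follows whenever at least one admissible path between them exists.
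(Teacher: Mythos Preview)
Your proposal is correct and follows essentially the same route as the paper: both reduce to Theorem~\ref{th:exist}, use $G$-invariance to obtain a uniform bound relating $\tau_x(\xi)$ and $|\xi|$, invoke completeness of homogeneous Riemannian metrics, and observe that exactness of $\tau$ on $\A$ is precisely what makes $s_1$ depend only on the endpoints. The only cosmetic differences are that the paper rescales the Riemannian metric rather than $\tau$, and obtains the uniform bound via compactness of $U_{x_0}$ (Lemma~\ref{lem:compactsection}) rather than of the unit sphere in $C^+_{x_0}$; it also pauses to note that $\tau$ is well-defined thanks to the $G_{x_0}$-invariance of $\tau_{x_0}$, which you take for granted.
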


\begin{proof}

    First of all, let's note that the form $\tau$ is well-defined. Indeed, its independence of the chosen element $g \in G$ that maps the point $x_0$ to $x$ is guaranteed by the $G_{x_0}$-invariance of the covector $\tau_{x_0}$.

    Due to the exactness of the 1-form $\tau$, there exists a function $T : \A \rightarrow \R$ such that $dT = \tau$. As shown in the proof of Theorem~\ref{th:exist}, the original optimal control problem~\eqref{eq:initial problem} is equivalent to problem~\eqref{eq:reparametrized problem}. Moreover, the terminal time of the latter problem is given by
    \[
        s_1 = \int\limits_{0}^{1}{\nu_{x(t)}(\dot{x}(t))\, dt} = T(x_1).
    \]
    So it does not depend on the choice of the admissible path $x(\cdot)$ going from the point $x_0$ to $x_1$ and is determined by the end point $x_1$.

    Note that the homogeneous Riemannian metric $g^R$ is complete, see, for example, \cite[Lemma~5.4]{beem-ehrlich-easley}.

    The set $U_x$ is a shift (by the action of the group $G$) of the set $U_{x_0}$, which is compact according to Lemma~\ref{lem:compactsection}. Therefore, there exists a constant $c > 0$ such that for any $x \in M$ and any $\xi \in U_x$, we have $|\xi| < c$. Hence, the sublinear growth condition~\eqref{eq:xi estimate} holds for the Riemannian metric $\frac{1}{c^2}g^R$. Thus, condition~(1) of Theorem~\ref{th:exist} holds, and condition~(2) is satisfied automatically. The application of this theorem completes the proof.
\end{proof}

\begin{remark}\label{rem:compactstabilizer}
    It is well known that if the stabilizer $G_{x_0}$ is compact, then there exists a $G$-invariant Riemannian metric on the manifold $M$.
    It is sufficient to obtain a $G_{x_0}$-invariant positive definite quadratic form on $T_{x_0}M$
    by averaging an arbitrary positive definite quadratic form over the compact group $G_{x_0}$,
    and then to spread it using the action of the group $G$.
\end{remark}

\section{Sub-Lorentzian Structures on Carnot Groups}
\label{sec:carnot}

Consider another example of applying the main theorem of this work. We will prove the existence of the longest paths for left-invariant sub-Lorentzian structures on Carnot groups. This example is important because control systems on Carnot groups provide a nilpotent approximation for control systems~\cite{agrachev-sarychev}.

\begin{definition}\label{def:carnot}

  	The \emph{Carnot algebra} is a graded Lie algebra $\g$ such that

    \[
        \g = \g_1 \oplus \dots \oplus \g_s, \qquad [\g_1, \g_i] = \g_{i+1}, \qquad [\g_1, \g_s] = 0, \qquad \g_s \neq 0.
    \]
    In particular, the Lie algebra $\g$ is generated by the subspace $\g_1$. The number $s$ is called the \emph{step} of the Carnot algebra. The corresponding connected and simply connected Lie group is called the \emph{Carnot group}.

\end{definition}

Below we identify the Lie algebra $\g$ with the tangent space $T_{\id}G$ at the identity element of the group.

\begin{corollary}\label{crl:carnot}

	Let $G$ be a Carnot group with the graded Carnot algebra $\g = \g_1 \oplus \dots \oplus \g_s$. Consider a closed convex pointed cone $C^+ \subset \g_1$ and an antinorm $\nu$ on it. Then for the left-invariant sub-Lorentzian problem

    \begin{equation}\label{eq:carnotproblem}
        \begin{gathered}
            \int\limits_0^1{\nu(u(t))\, dt} \rightarrow \max,\\
            x(0) = x_0, \qquad x(1) = x_1, \qquad \dot{x}(t) = x(t)_* u(t), \qquad u(t) \in C^+
        \end{gathered}
    \end{equation}
	there exists a longest path from $x_0$ to $x_1$ if and only if there is at least one admissible path from $x_0$ to $x_1$.

\end{corollary}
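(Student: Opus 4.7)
The plan is to reduce Corollary~\ref{crl:carnot} to Theorem~\ref{th:leftinv} by producing an appropriate covector $\tau_{\id} \in \g^*$ whose left-invariant extension $\tau$ is exact on the reachable set. Since $G$ acts on itself by left translations with trivial stabilizer, the $G_{\id}$-invariance condition on $\tau_{\id}$ is vacuous, and a left-invariant Riemannian metric is obtained at once by spreading any positive definite quadratic form on $\g$ via left translations (see Remark~\ref{rem:compactstabilizer}). So the only two nontrivial tasks are: (a) choosing $\tau_{\id}$ so that $\tau_{\id}(C^+ \setminus 0) > 0$, and (b) arranging that the resulting left-invariant 1-form $\tau$ is exact on $\A$.

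For (a), I would use the fact that $C^+ \subset \g_1$ is a closed, convex, pointed cone in the finite-dimensional space $\g_1$. A standard separation argument (equivalently, the dual cone of a closed pointed convex cone has nonempty interior) produces a linear functional $\tau_1 \in \g_1^*$ with $\tau_1(v) > 0$ for every $v \in C^+ \setminus 0$. I then define $\tau_{\id} \in \g^*$ by extending $\tau_1$ to $\g$ via $\tau_{\id}|_{\g_2 \oplus \cdots \oplus \g_s} \equiv 0$. By construction $\tau_{\id}(C^+ \setminus 0) > 0$, which is precisely the positivity hypothesis required by Theorem~\ref{th:leftinv}.

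For (b), the decisive observation is that this particular extension makes $\tau$ closed. For any left-invariant 1-form on $G$ and any left-invariant vector fields $X,Y$, the Maurer--Cartan formula gives $d\tau(X,Y) = -\tau([X,Y])$. In a Carnot algebra one has $[\g,\g] = \g_2 \oplus \cdots \oplus \g_s$, so $\tau_{\id}([\g,\g]) = 0$ by our choice, and hence $d\tau = 0$ identically on $G$. Since a Carnot group is connected and simply connected by definition, $H^1(G) = 0$, so the closed form $\tau$ is globally exact; in particular it is exact when restricted to $\A$.

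With (a) and (b) in hand, all hypotheses of Theorem~\ref{th:leftinv} are satisfied: the sub-Lorentzian structure and the covector $\tau_{\id}$ are left-invariant, the Riemannian metric is $G$-invariant and complete (left-invariant Riemannian metrics on connected Lie groups are complete), and $\tau$ is exact on $\A$. Applying Theorem~\ref{th:leftinv} immediately yields the equivalence between existence of a longest path and existence of an admissible path from $x_0$ to $x_1$. The main conceptual step — essentially the only place where the Carnot hypothesis is used — is the identification $[\g,\g] = \g_2 \oplus \cdots \oplus \g_s$ combined with the Maurer--Cartan formula; everything else is a direct verification of the assumptions of the earlier theorem.
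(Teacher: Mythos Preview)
Your proposal is correct and follows the same overall strategy as the paper: pick $\tau_{\id}\in\g^*$ strictly positive on $C^+\setminus 0$ and vanishing on $[\g,\g]=\g_2\oplus\cdots\oplus\g_s$, then invoke Theorem~\ref{th:leftinv}. The one substantive difference lies in how exactness of the left-invariant form $\tau$ is established. The paper works in exponential coordinates: via the Baker--Campbell--Hausdorff--Dynkin formula it shows that the $\g_1$-component of $\exp^{-1}(x(t))$ has derivative $u(t)$, so that $T(x)=\tau_0(\exp^{-1}x)$ is an explicit primitive of $\tau$ on the reachable set. You instead use the Maurer--Cartan identity $d\tau(X,Y)=-\tau_{\id}([X,Y])$ for left-invariant $X,Y$, deduce $d\tau=0$ from $\tau_{\id}|_{[\g,\g]}=0$, and then appeal to the simple connectedness of the Carnot group (built into Definition~\ref{def:carnot}) to pass from closed to globally exact. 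Your route is shorter and more conceptual; the paper's computation has the minor advantage of producing the primitive explicitly and reuses the BCH formula~\eqref{eq:dynkin} in the subsequent Remark~\ref{rem:carnot}.
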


\begin{proof}
	
	Due to the left-invariance of the problem, we can assume that $x_0 = \id$.
	
	Since the cone $C^+$ is acute, there exists a covector $\tau_0 \in \g_1^*$ such that $\tau_0(C^+ \setminus 0) > 0$.
	Indeed, otherwise, the interior of the polar cone~\cite[\S\,14]{rockafellar} $C^{\circ} \subset \g_1^*$ would be empty.
	Then, there would exist a nontrivial subspace
    \[
        (\sspan{C^{\circ}})^{\circ} = \{\xi \in \g_1 \,|\, \forall\, \tau \in C^{\circ} \ \tau(\xi) = 0\} \subset C^+,
    \]
    which contradicts the acuteness of the cone $C^+$.

	We may assume that $\tau_0 \in \g^*$.
	To achieve this, we extend the linear function $\tau_0 : \g_1 \rightarrow \R$ to a linear function on the entire Lie algebra $\g$ by setting $\tau_0|_{[\g, \g]} = 0$.
	Consider the left-invariant 1-form on the group $G$ obtained by left shifts of the covector $\tau_0$.
	We will denote this 1-form by the symbol $\tau$.
	Let's show that the form $\tau$ is exact on the reachable set $\A$ of the system~\eqref{eq:carnotproblem} from the group identity.

    Since the group $G$ is nilpotent, the exponential map $\exp : \g \rightarrow G$ is a diffeomorphism~\cite[\S\,6.4]{kirillov}.
    Let's calculate the images of admissible velocities under the diffeomorphism $\exp^{-1}$.
    Using the Baker-Campbell-Hausdorff-Dynkin formula~\cite{dynkin} (also see, for example, \cite[\S\,6.4, Theorem~2]{kirillov}), for any $\xi, u \in \g$, due to the nilpotency of the Lie algebra $\g$, we have
    \begin{equation}\label{eq:dynkin}
        \exp{(\xi)} \cdot \exp{(s u)} = \exp{\left(\xi + s u + s \left(c_1\ad{\xi}+\dots+c_{s-1}(\ad{\xi})^{s-1}\right)u + o(s)\right)},
    \end{equation}
	where $c_1,\dots,c_{s-1}$ are certain coefficients.

    Let $x = \exp{(\xi)} \in G$, and $\gamma(s) = x \cdot \exp(s u)$ be a curve such that $\gamma(0) = x$, $\dot{\gamma}(0) = x_* u$. Let's calculate the tangent vector at the point $\xi$ of its preimage under the diffeomorphism $\exp$:
    \[
        \left.\frac{d}{ds}\right|_{s = 0} \exp^{-1}{\left(\gamma(s)\right)} = u + \left(c_1\ad{\xi}+\dots+c_{s-1}(\ad{\xi})^{s-1}\right)u.
    \]
    Then, for the velocity of the preimage $\xi(t) = \exp^{-1}{(x(t))}$ of an admissible path $x : [0, 1] \rightarrow G$ with control $u(t)$ at the Lebesgue point, we obtain:
    \begin{equation}\label{eq:velosity}
        \dot{\xi}(t) \in u(t) + [\g,\g].
    \end{equation}

    Let's define the value of the function $T : \A \rightarrow \R$ at a point $x_1 \in \A$ as the integral of the form $\tau$ along an admissible path $x : [0, 1] \rightarrow G$ from the identity element $\id$ to the point $x_1$. Due to the left-invariance of the 1-form $\tau$, we have:
    \[
        T(x_1) = \int\limits_0^{1}{\tau(\dot{x}(t)) \, dt} = \int\limits_0^{1}{\tau_0(u(t))\, dt}.
    \]

	Introduce the notation $\xi^{(1)} = \pr \circ \exp^{-1} (x)$ for the first-layer component of the point $x \in G$, where $\pr : \g \rightarrow \g_1$ is the projection along the subspace $[\g, \g]$. Then, due to the inclusion~\eqref{eq:velosity}, we have $\dot{\xi}^{(1)}(t) = u(t)$ for an admissible path $x(\cdot)$. Hence, $T(x_1) = \tau_0(\xi^{(1)}(1)) = \tau_0(\xi(1)) = \tau_0(\exp^{-1}{(x_1)})$. Therefore, the function $T$ is well-defined, and $dT = \tau$.

    Thus, the form $\tau$ is exact on the set $\A$. Taking into account Remark~\ref{rem:compactstabilizer}, we can now apply Theorem~\ref{th:leftinv}, which completes the proof.
\end{proof}

\begin{remark}\label{rem:carnot}

	The sub-Lorentzian structure given by a quadratic form with signature $(1,r)$ on a Carnot group of step $1$ is simply the Lorentzian structure on Minkowski space, i.e., a model of special relativity. For sub-Lorentzian structures on certain Carnot groups of step $2$, the following interpretation can be proposed.
	
	Consider Minkowski space $\g_1 = \R^{1,r}$ equipped with a quadratic form of signature $(1,r)$, which, in the basis $e_0, e_1, \dots, e_r$, has the form:
	\[
	    g(x) = x_0^2 - x_1^2 - \dots - x_r^2.
	\]
	Let $C^+ = \{x \in V \,|\, g(x) \geq 0, \ x_0 \geq 0\}$, and $\nu(x) = \sqrt{g(x)}$ for $x \in C^+$.
	Then, $x_0$ represents the time coordinate, while $x_1, \dots, x_r$ represent spatial coordinates. We have the decomposition $\g_1 = \ttt \oplus \s$, where $\ttt = \sspan{\{e_0\}}$, and $\s = \sspan{\{e_1,\dots,e_r\}}$.
	
	Consider the graded Carnot algebra $\g = \g_1 \oplus \g_2$ of step 2, where $\g_2 = \ttt \wedge \s$.
	Choose coordinates $y_1, \dots, y_r$ in the space $\g_2$, associated with the basis $e_0 \wedge e_1, \dots, e_0 \wedge e_r$.
	Then, the non-zero Lie brackets in this algebra take the coordinate form $[a, b]_i = a_0b_i - b_0a_i$, where $a, b \in \g_1$.
	
	The corresponding connected and simply connected Lie group $G$ is diffeomorphic to its Lie algebra.
	Consider the sub-Lorentzian structure on the group $G$ obtained by left shifts of the cone $C^+$ and the anti-norm $\nu$.
	Then, from formula~\eqref{eq:dynkin}, where $c_1 = \frac{1}{2}$, it follows that the control system of problem~\eqref{eq:carnotproblem} takes the form:
	\begin{equation}\label{eq-2step-Carnot}
	    \begin{array}{ll}
	        \dot{x}_i = u_i, & i=0,\dots,r, \\
	        \dot{y}_i = \frac{1}{2}\left(x_0 u_i - x_i u_0\right) , & i=1,\dots,r. \\
	    \end{array}
	\end{equation}
	It is easy to show that for any $i = 1,\dots,r$, the quantity $y_i(1)$ is the oriented area $S_i$ of the planar region $\Omega_i \subset \sspan{\{e_0, e_i\}}$, enclosed between the segment $I_i$ connecting the points $(0,0)$ and $(x_0(1), x_i(1))$, and the projection of the admissible path $\gamma_i(t) = (x_0(t), x_i(t))$ onto the plane $\sspan{\{e_0, e_i\}}$. Indeed, since $\partial\,\Omega_i = \gamma_i \cup I_i$, according to the Stokes theorem:
	\[
	    S_i = \int\limits_{\Omega_i}{dx_0 \wedge dx_i} = \frac{1}{2} \int\limits_{\gamma_i \cup I_i}{\left(x_0dx_i - x_i dx_0\right)}
	    = \frac{1}{2} \int\limits_{\gamma_i}{\left(x_0dx_i - x_i dx_0\right)} + \const.
	\]
	From here, taking into account~\eqref{eq-2step-Carnot}, we obtain $\dot{S}_i = \dot{y}_i$. This fact is absolutely analogous to the interpretation of the Dido inverse problem as a sub-Riemannian problem on the Heisenberg group, see~\cite[\S\,1]{montgomery} for more details.
	
	The segment connecting points $0$ and $x(1)$ represents the graph of uniform rectilinear motion. This segment, along with the velocities of any other admissible paths, lies in the cone $C^+$. Therefore, the area $S_i$ can be interpreted as the average deviation of the motion of the material point $x(t)$ from uniform rectilinear motion, with respect to the coordinate $x_i$. On the other hand, according to special relativity, the Lorentzian length of an admissible path is interpreted as the intrinsic time of the moving material point. Thus, it can be said that the sub-Lorentzian problem on the group $G$ (i.e., the slow motion problem for the control system~\eqref{eq-2step-Carnot}) consists in maximizing the intrinsic time of a material point moving from point $0$ to the given point $x(1)$, with the given $y_i(1)$, $i=1,\dots,r$ --- average per-coordinate deviations from uniform rectilinear motion.
\end{remark}

\section{Discussion of the Result}
\label{sec-discussion}

Theorem~\ref{th:exist} generalizes the well-known existence theorem for longest paths in Lorentzian geometry~\cite{beem-ehrlich-easley},
extended in works~\cite{grochowski,sachkov} to the sub-Lorentzian case, as stated in Theorem~\ref{th:compactcondition} below.
Here, by a sub-Lorentzian structure, we mean a structure defined by a distribution $\Delta \subset TM$ with a fiber dimension of $r+1$, a non-degenerate quadratic form $g$ of signature $(1,r)$ on it,
smoothly depending on the manifold's point,
and a time orientation 1-form $\tau \in \Lambda^1M$, i.e.,
\[
    C^+_x = \{\xi \in \Delta_x \,|\, g_x(\xi) \geq 0\ \text{and} \ \tau(\xi) \geq 0\}, \qquad \nu_x(\xi) = \sqrt{g_x(\xi)}, \ \xi \in C^+_x.
\]

Let's denote by $\A^+_{x_0} \subset M$ the reachable set from the point $x_0$, and by $\A^-_{x_1} \subset M$ the set of points from which the point $x_1$ is reachable.

\begin{theorem}[\cite{beem-ehrlich-easley,grochowski,sachkov}]\label{th:compactcondition}
	Let $M$ be a sub-Lorentzian manifold, $x_0, x_1 \in M$. Suppose that $x_1 \in \A^+_{x_0}$, the set $\A^+_{x_0} \cap \A^-_{x_1}$ is compact, and there are no closed admissible paths on the manifold $M$. Then there exists a longest path going from the point $x_0$ to the point $x_1$.

\end{theorem}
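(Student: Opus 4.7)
The plan is to adapt the proof of Theorem~\ref{th:exist} to this compact setting, exploiting compactness of the reachability diamond $K = \A^+_{x_0} \cap \A^-_{x_1}$ to bypass both the global completeness and the $H^1(M)=0$ hypotheses. Every admissible path from $x_0$ to $x_1$ lies in $K$, so all analysis can be localized there; fix any Riemannian metric on a neighborhood of $K$. Reparametrize each admissible path by $s(t)=\int_0^t \tau_{x(\sigma)}(\dot x(\sigma))\,d\sigma$, where $\tau$ is the given time orientation. Compactness of $K$ together with strict positivity of $\tau$ on $C^+\setminus 0$ yields a constant $c>0$ with $\tau_x(\xi)\ge c|\xi|$ for every $x\in K$ and $\xi\in C^+_x$, so reparametrized paths are uniformly $1/c$-Lipschitz and the analogues of Lemmas~\ref{lem:compactsection} and~\ref{lem:semicont} go through on $K$ verbatim.

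The main obstacle is a uniform bound on the terminal parameter $s_1=\int_0^1 \tau(\dot x)\,dt$ over all admissible paths from $x_0$ to $x_1$, because here $\tau$ is not assumed closed. Assume for contradiction that $s_1^{(n)}\to\infty$ along a sequence of admissible paths $x_n$. After reparametrization the paths are uniformly Lipschitz in $K$, and by Arzel\`a--Ascoli with a diagonal extraction a subsequence converges uniformly on compact subsets to an admissible Lipschitz ray $x_\infty:[0,\infty)\to K$, admissibility of the limit following from the Filippov upper semicontinuity of $x\mapsto U_x$. Compactness of $K$ forces recurrence: one finds $s'<s''$ with $x_\infty(s')$ and $x_\infty(s'')$ arbitrarily close. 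Appealing to the structural assumptions of Theorem~\ref{th:compactcondition} --- that $C^+$ arises from a distribution equipped with a non-degenerate quadratic form of signature $(1,r)$, giving local controllability (indeed a small admissible ``return'' cone) around any point of $K$ --- one perturbs the arc $x_\infty|_{[s',s'']}$ into a closed admissible curve in $K$, contradicting the no-closed-admissible-paths hypothesis. This closing-up step is the delicate part and is essentially a strong-causality argument.

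Once $s_1\le S^*<\infty$ is established, the remainder mirrors the second half of the proof of Theorem~\ref{th:exist}. Relax the constraint $\tau(x')=1$ to $\tau(x')\in[0,1]$ so that every admissible path can be re-defined on the common interval $[0,S^*]$, and introduce the augmented state $z$ with $0\le z'(s)\le \nu_x(x'(s))$ exactly as in~\eqref{eq:auxiliary problem}. The resulting set-valued map is convex, compact-valued and Filippov upper semicontinuous on $K$, so Filippov's theorem (\cite[Theorem~3, \S7, Chapter~2]{Filippov}) gives compactness of the set of reachable $(x(S^*),z(S^*))$. The continuous functional $(x,z)\mapsto z$ then attains its maximum at some admissible trajectory, whose projection onto $M$ is the desired longest path from $x_0$ to $x_1$.
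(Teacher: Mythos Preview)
First, note that the paper does not supply its own proof of Theorem~\ref{th:compactcondition}: the statement is quoted from~\cite{beem-ehrlich-easley,grochowski,sachkov} and is used only as a benchmark against which Theorem~\ref{th:exist} is compared via Proposition~\ref{prop:impl}. So there is no ``paper's proof'' to match; your attempt is a freestanding argument that must be judged on its own.

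Your overall architecture (localize to the compact diamond $K$, reparametrize by $s=\int\tau(\dot x)$, then run the Filippov compactness machinery of~\eqref{eq:auxiliary problem}) is reasonable, and the final paragraph would indeed go through once a uniform bound $s_1\le S^*$ is in hand. The genuine gap is precisely the step you flag as ``delicate'': the closing-up argument used to derive a contradiction from $s_1^{(n)}\to\infty$. You write that the signature-$(1,r)$ quadratic form on the distribution gives ``local controllability (indeed a small admissible `return' cone)'' around points of $K$, and that this lets you perturb the almost-recurrent arc $x_\infty|_{[s',s'']}$ into a closed admissible curve. This is false. The cone $C^+_x$ is pointed; for any $0\ne\xi\in C^+_x$ one has $-\xi\notin C^+_x$, and in a causally convex neighborhood there is \emph{no} admissible path from a point to any point in its causal past. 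Sub-Lorentzian structures are not locally controllable in the Chow--Rashevskii sense, and there is no ``return cone''. Consequently, proximity of $x_\infty(s')$ and $x_\infty(s'')$ does not by itself allow you to close the arc admissibly.

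The arguments in~\cite{beem-ehrlich-easley,grochowski,sachkov} do extract a closed causal curve from a future-inextendible causal curve trapped in a compact set, but they do so via the limit-curve theorem (iterated passage to cluster curves, not a local perturbation), and this step in fact uses strong causality rather than merely the absence of closed admissible curves. If you want to repair your proof along these lines you must replace the ``return cone'' claim by a genuine limit-curve/strong-causality argument; as written, the bound on $s_1$ is unproved and the proof is incomplete.
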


\begin{remark}
    The classical formulation~\cite{beem-ehrlich-easley} of Theorem~\ref{th:compactcondition} requires the absence of closed admissible paths and the compactness of $\A^+_{x_0} \cap \A^-_{x_1}$ for any points $x_0, x_1 \in M$. These conditions are known as the \emph{global hyperbolicity} of the manifold.
\end{remark}

The novelty of the Theorem~\ref{th:exist} proved in this work lies in the requirement of the closedness of the time form $\tau$. It is worth noting again that to satisfy the conditions of Theorem~\ref{th:exist}, it is necessary to find a 1-form $\tau$ such that\footnote{and condition (1) of the growth theorem~\ref{th:exist} is satisfied}
\[
	\tau|_{C^+_x \setminus 0} > 0, \qquad d\tau = 0.
\]
If such a form $\tau$ exists, then the classical theorem~\ref{th:compactcondition} is a direct consequence of Theorem~\ref{th:exist}.

\begin{proposition}\label{prop:impl}

	Let a sub-Lorentzian structure on the manifold $M$ be given by a distribution $\Delta \subset TM$, a non-degenerate quadratic form $g$ of signature $(1, r)$ on it, smoothly depending on the manifold's points, and a time direction 1-form $\tau \in \Lambda^1M$, i.e.,
    \[
        C^+_x = \{\xi \in \Delta_x \,|\, g_x(\xi) \geq 0 \ \text{\emph{and}} \ \tau(\xi) \geq 0\}, \qquad \nu_x(\xi) = \sqrt{g_x(\xi)}, \ \xi \in C^+_x.
    \]

    Then, under the conditions of Theorem \emph{\ref{th:exist}}, the conditions of Theorem \emph{\ref{th:compactcondition}} are satisfied.

\end{proposition}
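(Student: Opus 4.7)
The plan is to verify both hypotheses of Theorem~\ref{th:compactcondition}, namely absence of closed admissible paths and compactness of $\A^+_{x_0} \cap \A^-_{x_1}$, by extracting a global potential for $\tau$ and combining it with a Gronwall-type estimate and the Filippov closure machinery already used in the proof of Theorem~\ref{th:exist}. First, using $d\tau = 0$ and $H^1(M) = 0$, I would write $\tau = dT$ for some smooth $T : M \to \R$. Condition~(1) of Theorem~\ref{th:exist} forces $\tau_x(\xi) > 0$ for every $\xi \in C^+_x \setminus 0$, so $T$ is strictly increasing along every non-constant admissible subarc. In particular, for a closed admissible path $\gamma : [0, 1] \to M$,
\[
    0 = T(\gamma(1)) - T(\gamma(0)) = \int_0^1 \tau_{\gamma(t)}(\dot\gamma(t))\,dt,
\]
and the nonnegative integrand vanishes only where $\dot\gamma = 0$; hence $\dot\gamma = 0$ a.e., so $\gamma$ is constant.

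For the compactness of $\A^+_{x_0} \cap \A^-_{x_1}$, boundedness comes from a Gronwall argument. For any $x \in \A^+_{x_0} \cap \A^-_{x_1}$ and any admissible $\gamma : [0, 1] \to M$ from $x_0$ to $x$, set $r(t) = \dist_R(A, \gamma(t))$. Then $|\dot r(t)| \le |\dot\gamma(t)| \le (1 + r(t))\,\tau(\dot\gamma(t))$ a.e., so integration of $\tfrac{d}{dt}\log(1 + r(t)) \le \tau(\dot\gamma(t))$ yields
\[
    \log(1 + r(1)) - \log(1 + \dist_R(A, x_0)) \le T(x) - T(x_0) \le T(x_1) - T(x_0),
\]
using $T(x) \le T(x_1)$ by monotonicity along an admissible path from $x$ to $x_1$. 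This uniformly bounds $\dist_R(A, \cdot)$ on $\A^+_{x_0} \cap \A^-_{x_1}$, so its Riemannian closure is compact by Hopf--Rinow.

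To prove closedness I would take $x^{(n)} \to x$ with $x^{(n)} \in \A^+_{x_0} \cap \A^-_{x_1}$ and admissible paths $\gamma_n$ from $x_0$ to $x^{(n)}$, reparameterized so that $\tau(\dot\gamma_n) \equiv 1$, with terminal times $s_n = T(x^{(n)}) - T(x_0)$ uniformly bounded by $T(x_1) - T(x_0)$. The Gronwall estimate above then applies uniformly in $n$, keeping all $\gamma_n$ in a common compact Riemannian ball and uniformly Lipschitz. Filippov-style closure of admissible trajectories---precisely Lemma~\ref{lem:semicont} combined with Theorem~2 of \S7, Chapter~2 in~\cite{Filippov}, exactly as invoked in the proof of Theorem~\ref{th:exist}---produces a subsequential uniform limit that is admissible and joins $x_0$ to $x$, so $x \in \A^+_{x_0}$; the symmetric argument with paths from $x^{(n)}$ to $x_1$ gives $x \in \A^-_{x_1}$. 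The main technical point is this closure step, but since it is a direct reuse of the Filippov machinery already assembled for Theorem~\ref{th:exist}, no genuinely new ideas are required.
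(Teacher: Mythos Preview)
Your proof is correct and follows essentially the same route as the paper: exactness of $\tau$ (via $d\tau=0$ and $H^1(M)=0$) rules out nontrivial closed admissible paths, and compactness of $\A^+_{x_0}\cap\A^-_{x_1}$ comes from the Filippov machinery already assembled in the proof of Theorem~\ref{th:exist}. The only difference is packaging: the paper observes in one line that $\A^+_{x_0}\cap\A^-_{x_1}$ sits inside the projection of the time-$\le s_1$ reachability set of the auxiliary system~\eqref{eq:auxiliary problem}, which is compact by Filippov's theorem, whereas you unpack this into an explicit Gronwall bound for boundedness and a separate sequential-closure argument for closedness. Your version is more self-contained (and in fact makes the closedness of $\A^+_{x_0}\cap\A^-_{x_1}$ explicit, which the paper leaves implicit), but no new ideas beyond those in the paper are involved.
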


\begin{proof}
    The integral of the 1-form $\tau$ along an admissible path of problem~\eqref{eq:reparametrized problem} is equal to the time of motion along that trajectory.
    Closed admissible trajectories are not possible, since according to the conditions of Theorem~\ref{th:exist} the form $\tau$ is exact. Furthermore, suppose the point $x_1$ is reachable in time $s_1$ from the point $x_0$. The set $\A^+_{x_0} \cap \A^-_{x_1}$ is the projection of the intersection of the reachable set of system~\eqref{eq:auxiliary problem} from the point $(x_0, 0)$ for a time not exceeding $s_1$ and the set of points from which the point $(x_1, z_1)$ is reachable, which are compact as it was shown in the proof of Theorem~\ref{th:exist}. Hence, the set $\A^+_{x_0} \cap \A^-_{x_1}$ is compact as well.
\end{proof}

The explicit description of reachable sets for control systems within a cone can be a computationally challenging task. For instance, this has been accomplished for certain cones on Carnot groups of low rank and step in works such as~\cite{abels-vinberg} (using some algebraic considerations) and~\cite{ardentov-ledonne-sachkov, podobryaev}
(using the Pontryagin maximum principle). In several cases (see Example~\ref{ex:hyperbolicplane} and Corollary~\ref{crl:carnot}), the application of Theorems~\ref{th:exist}--\ref{th:leftinv} allows us to infer the existence of longest paths without having information about the reachable sets.

\end{document}